\shorttitle{Spatial epidemics and a new percolation model}
\newcommand{\ind}{1\hspace{-2.5mm}{1}}
\numberwithin{theorem}{section}
\numberwithin{definition}{section}
\numberwithin{corollary}{section}
\numberwithin{remark}{section}
\numberwithin{example}{section}
\begin{document}

\title{Bounding basic characteristics of spatial\\ epidemics with a new percolation model}

\authorone[VU University Amsterdam]{Ronald Meester} 

\addressone{Vrije Universiteit Amsterdam, Department of Mathematics, De Boelelaan 1081a, 1081 HV Amsterdam, The Netherlands} 

\authortwo[VU University Amsterdam and University Medical Center Utrecht]{Pieter Trapman} 
\addresstwo{Stockholm University, Department of Mathematics, 106 91 Stockholm, Sweden}

\begin{abstract}
We introduce a new 1-dependent percolation model to describe and analyze the spread of an epidemic on a general directed and locally finite graph. We assign a two-dimensional random weight vector to each vertex of the graph in such a way that the weights of different vertices are i.i.d., but the two entries of the vector assigned to a vertex need not be independent. The
probability for an edge to be open depends on the weights of its end vertices, but conditionally on the weights, the states of the edges are independent of each other. In an epidemiological setting, the vertices of a graph represent the individuals in a (social) network and the edges represent the connections in the network. The weights assigned to an individual denote its (random) infectivity and susceptibility, respectively. We show that one can bound the percolation probability and the expected size of the cluster of vertices that can be reached by an open path starting at a given vertex from above by the corresponding quantities for independent bond percolation with a certain density; this generalizes a result of Kuulasmaa \cite{Kuul82}. Many models in the literature are special cases of our general model.
\end{abstract}

\keywords{dependent percolation; epidemics} 

\ams{60K35;92D30}{} 

\section{Introduction, background and main results}
\label{introvacc}
We consider an extension of the standard $SIR$ (Susceptible $\to$ Infectious $\to$ Removed)
epidemic \cite{Ande99,Ande00,Diek00} on a directed graph $G=(V,E)$. Here $V$ is the (countable) vertex set
of the graph, and $E$ consists of directed edges between vertices in $V$. An edge from $u$ to $v$
is denoted by $uv$, and we say that $v$ is a (directed) neighbour of $u$.  We assume that the graph $G=(V,E)$ is simple, that is, for any $u,v \in V$ there is at most one edge from $u$ to $v$. This simplicity assumption can easily be dropped. Furthermore, we assume that the graph is locally finite, in the sense that both the in-degree and out-degree of every vertex are finite.

In a standard $SIR$ epidemic, a vertex is identified with an individual which makes (asymmetric) contacts with each of its neighbours at rate $\tau$. If an infectious individual $u$ contacts a susceptible individual $v$, then $v$ becomes infectious itself. If an individual becomes infectious it will stay infectious for a random time; the infectious periods of different individuals are i.i.d. After the infectious period an individual is removed, which can either mean that it is recovered or that the individual has died. A removed individual never becomes susceptible or infectious again. Usually, one assumes that there is one initially infectious individual $v_0$, and that all other individuals in the network are initially susceptible. Furthermore, one assumes that demography plays no role, in the sense that we ignore births, deaths not caused by the infectious disease and migration. This is a reasonable assumption if we consider emerging infectious diseases for which the time-scale of  the spread is much smaller than the time-scale of demography.

In the model just described, one implicitly assumes that all individuals in the network are the same (at least with respect to the epidemic) apart from their position in the network. In particular all individuals will have the same total infectivity and susceptibility.
In real-life however, infectivity and susceptibility  show individual variation, notably because of immunological polymorphism, or due to polymorphic reactions to vaccination \cite{Beck98,Beck02}. The infectivity and susceptibility of one individual are in general not independent. Dependencies may arise because of confounding factors, such as age, general health status or in case of sexually transmitted diseases, promiscuity and levels of condom use, which affect the susceptibility and infectivity in the same direction.

In this paper, we model heterogeneity of the population by assigning a random infectivity $W_v$ and susceptibility $\bar{W}_v$ to each vertex $v$ in $V$, where the vectors $(W_v,\bar{W}_v)$
are assumed to be i.i.d.\ and distributed as $(W,\bar{W})$, taking values in a convex subset $S$ of $\mathbb{R}_+^2 := [0,\infty)^2$.
We do not assume that $W_v$ and $\bar{W}_v$ are independent. Conditionally on the weights,
if $u$ becomes infected and $uv \in E$, then $v$ becomes infected (if it was not already) with probability
$\kappa(W_u,\bar{W}_v)$, where $\kappa$ is some connection function, specified in the model. In percolation terms, this means
that the directed edge $uv$ is open with (conditional) probability $\kappa(W_u, \bar{W}_v)$; Conditioned on the weights, the states of the edges are independent. However, without this conditioning states of edges sharing an end-vertex are dependent through the weights assigned to this common end-vertex.

We are mainly interested in
(i) the probability of a large outbreak, (ii) the probability that a disease spreads from one given individual
to another one and (iii) the expected final size of an epidemic. Percolation models have served before as useful tools to analyze these quantities; see for instance \cite{Cox88,Durr06,Kena07,Kuul82,Mill08,Newm02,Trap07,Trap10} for related material. We denote by $\mathbb{P}$ the probability
measure governing the full process of assigning weights, and making the edges open or closed.
(We do not really need to formally define the full sample space of the process.)
One necessary property
is that
\begin{equation}\label{unitinterval}
\mathbb{P}(\kappa(W_u, \bar{W}_v)\in [0,1])=1,
\end{equation}
since $\kappa(\cdot, \cdot)$ represents a probability. Sometimes it is useful
to discuss the induced measure of $\mathbb{P}$ on the space $\Omega :=\{{\rm open, closed}\}^E$, that is the induced measure on
configurations of open and closed edges.

We consider connection functions $\kappa(x,y)$, which can be written as $\kappa(xy)$, where $\kappa(z)$ is non-decreasing and concave. Examples of functions satisfying these conditions are
\begin{eqnarray}
\kappa_a(x,y) & = & xy \qquad \mbox{with $S =[0,1]^2$},\label{factorisable}\\
\kappa_b(x,y) & = & 1-e^{-\alpha xy} \qquad \mbox{with $\alpha>0$ and $S=\mathbb{R}_+^2$},\label{mooi1}\\
\kappa_c(x,y) & = & xy(\beta + xy)^{-1} \qquad \mbox{with $\beta>0$ and $S=\mathbb{R}_+^2$}.\label{ggg}
\end{eqnarray}

We note that if $\kappa(x,y)$ is {\em factorisable}, i.e.\ if there exists functions $\kappa_1(x)$ and $\kappa_2(y)$ such that $\kappa(x,y) = \kappa_1(x) \kappa_2(y)$), then we can without loss of generality assume that $\kappa(x,y)=xy$ and $S=[0,1]^2$.
This can easily be seen by first replacing $\kappa_1(W)$ by $W$ and $\kappa_2(\bar{W})$ by $\bar{W}$ and then scaling $W$ and $\bar{W}$ such that they both take values in $[0,1]$ with probability 1. This is possible because of (\ref{unitinterval}) and one can scale $W$ and $\bar{W}$ such that both are in $[0,1]$ with probability 1.

The connection functions $\kappa_a$, $\kappa_b$ and $\kappa_c$ have important epidemiological interpretations.
A factorisable connection function is appropriate in situations
in which there is at most one contact from an infectious individual to a given neighbour during its infectious period, or when only at the first contact of an infectious individual with a given neighbour the infection may be transmitted. This last assumption is proposed in some models for the spread of HIV \cite{Knol04,Watt92}, where the number of sexual contacts per couple can be ignored and only the number of partners is of importance. In those models the probability of an infectious contact from $u$ to $v$ is given by $\kappa(W_{u},\bar{W}_{v})=W_{u} \bar{W}_{v}$, with $W_u= 1 - \exp[-\int_0^{\Lambda_u} \tau_u(x)dx]$, where $\Lambda_u$ is the length of the infectious period of individual $u$ and $\tau_u(x)$ is the (possibly inhomogeneous) rate at which individual $u$ makes infectious contacts at time $x$ after its infection, and where $\bar{W}_v$ is the probability that individual $v$ (if still susceptible) becomes infected at an infectious contact.

The choice in (\ref{mooi1}) can be found in \cite{Brit06}, and (\ref{ggg}) is
discussed in \cite{Norr06}, both in the context of complete graphs. In neither of these two papers epidemiological interpretations of the connection functions are given.
To shed some light on a possible epidemiological interpretation of $\kappa_b$ and
$\kappa_c$, we write
$$
\kappa_b(x,y) = 1-e^{-\alpha xy} = 1- \sum_{k=0}^{\infty} \frac{(\alpha x)^k}{k!}e^{-\alpha x} (1-y)^k
$$
and
$$
\kappa_c(x,y) = \frac{xy}{\beta + xy} = 1- \sum_{k=0}^{\infty}
\frac{\beta}{x + \beta}\left(\frac{x}{x + \beta}\right)^{k}  (1-y)^k.
$$
When $0<y<1$, we see that we can interpret these connection functions as follows: for
$\kappa_c$, an infectious individual has a geometric-$\beta/(x + \beta)$ number of contacts with a neighbour, and each time, the probability that the infection is accepted is equal to $y$. For $\kappa_b$, a similar interpretation is possible,
replacing the geometric number of attempts by a Poisson-$\alpha x$ number. In
both cases, the number of attempts stochastically increases when $x$ grows, in
accordance to our interpretation of $W$ as infectivity.

It should be noted that
(\ref{mooi1}) arises when the infectious period of every individual is exponentially-$\beta$ distributed and the per neighbour infection rate of individual $u$ is $W_u$, while the probability that an infectious contact with susceptible individual $v$ leads to an infection is given by $\bar{W}_v$.
If the infectious period of individual $u$ is $\Lambda_u$, and during its infectious period it makes infectious contacts with every neighbour at rate $\alpha \tau_u$, then the number of attempts will have a Poisson-$\alpha \tau_u \Lambda_u$
distribution; hence both $\kappa_b$ and $\kappa_c$ arise naturally.
Note that replacing the combination $\kappa_c$ and $(W,\bar{W})$ by $\kappa_b$ and $(\Lambda W / \alpha,\bar{W})$, where $\Lambda$ is exponentially distributed with parameter $\beta$ and independent of $W$, does not change the induced measure on $\Omega$.

In large randomly mixing populations it is usually assumed that the contact rate of a pair of individuals scales with $n^{-1}$, where $n$ is the number of individuals in the population. With this assumption, multiple contacts of a pair of individuals are rare, and the difference between $\kappa(x,y)= 1-e^{-xy/n}$ and $\kappa(x,y)=xy/n$, is of order $n^{-2}$.

We define the usual independent bond percolation measure $\mathbb{P}^{bond}_p$ as the product measure on $\Omega$ in which edges are independently open with probability $p$ \cite{Grim99}.
If $\kappa(x,y) =xy$, $\mathbb{P}(W= W^*, \bar{W}=\bar{W}^*)=1$ and $W^*\bar{W}^* =p$, then the induced measure of $\mathbb{P}$ on  $\Omega$ is just $\mathbb{P}^{bond}_p$.
If $\kappa(x,y) =xy$ and $\mathbb{P}(W = \bar{W} =1)  = 1 - \mathbb{P}(W = \bar{W} =0)=p$, we denote the corresponding measure by $\mathbb{P}^{site}_p$. Indeed, this measure corresponds to the edge representation of independent site percolation with
parameter $p$ \cite{Grim99}, in which an edge is open if and only if both its starting and ending vertex are open. Note that
although $\mathbb{P}^{bond}_p$ is defined on $\Omega$, $\mathbb{P}^{site}_p$ is defined on the full space. This
is perhaps slightly confusing, but it works best this way.

In order to state our results, we need a few definitions.
An ordered set of edges in $E$, $\xi = (v_{0}v_{1}, v_{1}v_{2}, \ldots, v_{n-1}v_{n})$ is a (directed) path of length $n$ from $v_{0}$ to $v_{n}$. If $v_{i} \neq v_{j}$ for all $0 \leq i< j \leq n$, then the path is \textit{self-avoiding}.  It is straightforward to extend this definition to self-avoiding paths of infinite length. If we assume that a path is self-avoiding we will explicitly mention this. With some abuse of terminology we define a \textit{trivial path} as a path without edges, but with a starting vertex and the same end-vertex (i.e.\ a trivial path may be seen as a single vertex).

We say that a path is open if all edges in the path are open; a trivial path is always open. We use the notation $v_i \leadsto v_j$ if there is at least one open path from $v_i$ to $v_j$. If the final vertex of a path $\xi_1$ is the first vertex of a path $\xi_2$, we write $(\xi_1,\xi_2)$ for the {\em conjunction} of $\xi_1$ and $\xi_2$, that is, if $\xi_1 = (v_{0}v_{1}, v_{1}v_{2}, \ldots, v_{n-1}v_{n})$ and $\xi_2 = (v_{n}v_{n+1}, v_{n+1}v_{n+2}, \ldots, v_{n+m-1}v_{n+m})$, then
$$
(\xi_1,\xi_2) = (v_{0}v_{1}, v_{1}v_{2}, \ldots, v_{n-1}v_{n}, v_{n}v_{n+1}, \ldots, v_{n+m-1}v_{n+m}).
$$

For a finite or infinite path $\xi = (v_{0}v_{1}, v_{1}v_{2}, \ldots, v_{n-1}v_n,v_n v_{n+1} \cdots)$, we define the {\it truncation of $\xi$ after $n$ edges} as $\xi^{s}(n) := (v_{0}v_{1}, v_{1}v_{2}, \ldots, v_{n-1}v_n)$.
The {\it tail of $\xi$ starting after $n$ edges} is defined as $\xi^{t}(n) := (v_nv_{n+1}, \ldots)$. Both the truncation after $n$ edges and the tail starting after $n$ edges may be trivial paths.
We are now ready to specify the collections of paths we consider in this paper.

\begin{definition}
We say that a collection of paths $\Xi$ is {\em weakly hoppable}, if
for any $v \in V$, any $i,j \in \mathbb{N}$ and any two paths $\xi, \phi \in \Xi$ going through
$v$, where $v$ is the end vertex of the $i$-th edge of $\xi$ and the start vertex of the $j$-th edge of $\phi$, the conjunction $(\xi^{s}(i),\phi^{t}(j))$ is in $\Xi$ as well.
\end{definition}
In words, we need to be able to ``hop" from one path to another if they cross.
We allow for $\xi = \phi$, which implies that if a path in a weakly hoppable collection of paths $\Xi$ has a loop, then
this loop can be erased and the resulting path is still in $\Xi$.

Furthermore, let $E^{(n)}$ be the collection of the first $n$ edges in $E$, according to some given enumeration of the edges for which $\cup_{n \in \mathbb{N}}E^{(n)} = E$.  We ``approximate" $\Xi$ by sets $\Xi_n$ defined as follows: $\Xi_n$ is the collection of all infinite paths which start in $E^{(n)}$, truncated at the first instance they leave
$E^{(n)}$ together with all finite paths of which all edges are in $E^{(n)}$.
If the first edge of an infinite path $\xi$ is not in $E^{(n)}$, then the trivial path at the starting vertex of $\xi$ is in $\Xi_n$.

Finally, for a collection of paths $\Xi$ we denote by $\mathcal{C}^{\Xi}$, the event that at least one path in $\Xi$ is open. We note that for a weakly hoppable collection of paths $\Xi$, $\mathcal{C}^{\Xi}$ is equivalent to the event that at least one self-avoiding path in $\Xi$ is open, because loop-erased paths from $\Xi$ are also in $\Xi$.

\begin{definition}
We say that a collection of paths $\Xi$ is {\em hoppable} if it is weakly hoppable and if in addition,
for some enumeration of the edges,
$$
\mathcal{C}^{\Xi} = \lim_{n \to \infty} \mathcal{C}^{\Xi_n}.
$$
\end{definition}

\begin{remark}

\begin{enumerate}
\item
Most natural and useful collections of paths are hoppable. For instance, the collection of infinite paths starting at a given vertex (or, more generally, in a finite set) is hoppable, as is the collection of all paths from a given vertex $u$ to a given vertex $v$ (or, more generally, from a finite set to another finite set).
To see the first claim, we note that if $\Xi$ is the collection of all infinite paths starting at a finite set, then $\mathcal{C}^{\Xi_{n+1}} \subset \mathcal{C}^{\Xi_{n}}$ for all $n$ and $\mathcal{C}^{\Xi} = \cap_{n=1}^{\infty}\mathcal{C}^{\Xi_n}$.
If $\Xi$ is the collection of all paths from a finite set to another finite set, then $\mathcal{C}^{\Xi_{n}} \subset \mathcal{C}^{\Xi_{n+1}}$ for all $n$ and $\mathcal{C}^{\Xi} = \cup_{n=1}^{\infty}\mathcal{C}^{\Xi_n}$.
\item
If $\Xi_n$ would have been defined as ``the collection of all paths which start in $E^{(n)}$, truncated at the first instance they leave
$E^{(n)}$'', then the collection of all paths from vertex $u$ to vertex $v$ would not have been hoppable. Indeed, on $\mathbb{Z}^d$ we have that  $\mathcal{C}^{\Xi_{n+1}} \subset \mathcal{C}^{\Xi_{n}}$ for all $n$ but $\mathcal{C}^{\Xi} \neq \cap_{n=1}^{\infty}\mathcal{C}^{\Xi_n}$, since $\cap_{n=1}^{\infty}\mathcal{C}^{\Xi_n}$ occurs if there is an open paths from $u$ to $v$ or if there is an infinite open path starting at $u$.
\end{enumerate}
\end{remark}

The main results of this paper are the following, generalizing results in \cite{Kuul82} (see also \cite{Mill08} for related results for the case in which $W$ and $\bar{W}$ are independent).

\begin{theorem}
\label{firstimpthm}
Let $(W,\bar{W})$ be a random vector taking values in $S$ and let $\kappa(x,y) = \kappa(xy)$ be such that for $\kappa(z)$ is increasing and concave.
Then, for every $$p \geq \kappa(\max[\mathbb{E}(W\bar{W}),\mathbb{E}(W)\mathbb{E}(\bar{W})])$$ and any hoppable collection of paths $\Xi$, we have
$$\mathbb{P}(\mathcal{C}^{\Xi}) \leq \mathbb{P}_{p}^{bond}(\mathcal{C}^{\Xi}).$$
\end{theorem}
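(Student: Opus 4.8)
The plan is to reduce the inequality to a finite collection of self-avoiding paths, and then to couple the model with independent bond percolation of density $p$ by a depth-first exploration of the open paths in $\Xi$; the delicate point will be controlling the conditional law of the infectivity of a vertex once it has been reached by the exploration.

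Fix an enumeration of $E$ witnessing that $\Xi$ is hoppable. Then $\mathbf{1}_{\mathcal{C}^{\Xi_{n}}}\to\mathbf{1}_{\mathcal{C}^{\Xi}}$, so bounded convergence gives $\mathbb{P}(\mathcal{C}^{\Xi})=\lim_{n}\mathbb{P}(\mathcal{C}^{\Xi_{n}})$ and $\mathbb{P}^{bond}_{p}(\mathcal{C}^{\Xi})=\lim_{n}\mathbb{P}^{bond}_{p}(\mathcal{C}^{\Xi_{n}})$, and it suffices to prove the inequality for each $\Xi_{n}$. Each $\Xi_{n}$ involves only the finitely many edges of $E^{(n)}$; since $\mathcal{C}^{\Xi}$ for a weakly hoppable collection depends only on its self-avoiding members (loop-erased paths stay in the collection), we may assume that $\Xi$ is a finite, weakly hoppable collection of finite self-avoiding paths.

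For such a $\Xi$ I would run an exhaustive depth-first exploration. At each stage there is an open self-avoiding path $\pi$ that is a truncation of some member of $\Xi$, and we try to prolong $\pi$ from its end vertex $v$ along an edge $vu$ to a vertex $u$ not yet touched by the exploration; weak hoppability licenses erasing loops and never revisiting a vertex whose sub-exploration has been exhausted. We examine $vu$, move to $u$ if it is open (declaring success once $\pi\in\Xi$) and backtrack otherwise. Crucially we reveal $\bar W_{u}$ only when $vu$ is first examined and reveal $W_{v}$ only when $\pi$ is first prolonged through $v$; then $u$ is genuinely fresh when $vu$ is examined, in the sense that, conditionally on the information $\mathcal{F}$ revealed so far, $(W_{u},\bar W_{u})$ is independent of $(W_{v},\bar W_{v})$ and the conditional law of $\bar W_{u}$ is stochastically dominated by that of $\bar W$. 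Hence $\mathbb{P}(vu\text{ open}\mid\mathcal{F})=\mathbb{E}[\kappa(W_{v}\bar W_{u})\mid\mathcal{F}]$, which by two applications of Jensen's inequality (using that $\kappa$ is concave) is at most $\kappa\big(\mathbb{E}[W_{v}\mid\mathcal{F}]\,\mathbb{E}(\bar W)\big)$ when $v$ is a prolonging vertex, and equals $\mathbb{E}[\kappa(W\bar W')]\le\kappa(\mathbb{E}(W)\mathbb{E}(\bar W))$ ($W\perp\bar W'$) for the first edge out of a starting vertex. Granting that these numbers are all at most $p$, one couples each examined edge with an independent Bernoulli$(p)$ variable so that ``$vu$ open'' forces ``Bernoulli open,'' fills in the remaining edges with independent Bernoulli$(p)$'s, and observes that any path of $\Xi$ revealed open by the exploration is also open in the Bernoulli configuration; since the exploration is exhaustive, $\mathbb{P}(\mathcal{C}^{\Xi})=\mathbb{P}(\text{success})\le\mathbb{P}^{bond}_{p}(\mathcal{C}^{\Xi})$.

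The whole argument thus rests on the estimate $\mathbb{E}[W_{v}\mid\mathcal{F}]\,\mathbb{E}(\bar W)\le\max[\mathbb{E}(W\bar W),\mathbb{E}(W)\mathbb{E}(\bar W)]$ for every reached vertex $v$ (so that $\kappa$ increasing gives $\kappa(\cdot)\le\kappa(\max[\cdots])\le p$), and this is precisely why $p$ is defined with a maximum. The conditioning $\mathcal{F}$ reweights the law of $(W_{v},\bar W_{v})$ by a product of factors, one per examined edge at $v$: $\kappa(\cdot)$ for an edge found open and $1-\kappa(\cdot)$ for an edge found closed, with argument involving $W_{v}$ or $\bar W_{v}$ according as the edge leaves or enters $v$. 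Using the concavity of $\kappa$ and the inequality $\kappa(W\bar W')\le1$ a.s.\ for independent copies (by (\ref{unitinterval})), a size-biasing and stochastic-domination argument shows that such a reweighting never pushes $\mathbb{E}[W_{v}\mid\mathcal{F}]$ above $\max[\mathbb{E}(W\bar W)/\mathbb{E}(\bar W),\mathbb{E}(W)]$, the two terms corresponding to the two possible signs of the dependence between $W$ and $\bar W$. Making this tilt estimate rigorous, together with the bookkeeping that justifies the freshness and conditional-independence claims used above, is the main obstacle.
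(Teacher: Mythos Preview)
The sequential-coupling strategy breaks down exactly at the point you flag as ``the main obstacle'': the tilt estimate
\[
\mathbb{E}[W_v\mid\mathcal{F}]\,\mathbb{E}(\bar W)\;\le\;\max\bigl[\mathbb{E}(W\bar W),\,\mathbb{E}(W)\mathbb{E}(\bar W)\bigr]
\]
is in general \emph{false}, so it cannot be made rigorous. Take $\kappa(x,y)=xy$ and $W=\bar W$ a.s. If the exploration has already found one open edge into $v$ and one open edge out of $v$ (and nothing else at $v$), the conditional law of $(W_v,\bar W_v)$ is reweighted by a factor proportional to $\bar W_v\,W_v=W_v^{2}$, so $\mathbb{E}[W_v\mid\mathcal{F}]=\mathbb{E}(W^{3})/\mathbb{E}(W^{2})$. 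But Cauchy--Schwarz gives $[\mathbb{E}(W^{2})]^{2}\le\mathbb{E}(W)\,\mathbb{E}(W^{3})$, i.e.\ $\mathbb{E}(W^{3})/\mathbb{E}(W^{2})\ge\mathbb{E}(W^{2})/\mathbb{E}(W)=\mathbb{E}(W\bar W)/\mathbb{E}(\bar W)$, with strict inequality unless $W$ is a.s.\ constant. For $W$ uniform on $[0,1]$ one gets $\mathbb{E}[W_v\mid\mathcal{F}]=3/4>2/3$, and the conditional probability that the next edge out of $v$ (to a genuinely fresh vertex) is open equals $3/8>1/3=p$. This situation is unavoidable in any depth-first search: as soon as one out-edge of $v$ has been followed and its subtree exhausted, you must return to $v$ and try another out-edge, and by then the conditioning has accumulated. (If instead you literally reveal $\bar W_v$ as you say, the failure is even more immediate: with $W=\bar W$ you then know $W_v$ exactly, and it may equal $1$.) Hence no edge-by-edge domination by i.i.d.\ Bernoulli$(p)$ variables is available, and the coupling does not close.

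The paper circumvents this by replacing the exploration with a Lindeberg-type, vertex-by-vertex argument. One conditions on all weights \emph{except} those at a single vertex $u$ and on the states of all edges not touching $u$; weak hoppability then forces the residual event to have the product form ``at least one edge in some set $B$ into $u$ is open \emph{and} at least one edge in some set $A$ out of $u$ is open''. The comparison therefore reduces to an \emph{unconditional} inequality between the ``zero functions''
\[
\mathbb{E}\Bigl[\bigl(1-\textstyle\prod_i[1-\kappa(Wx_i)]\bigr)\bigl(1-\textstyle\prod_j[1-\kappa(y_j\bar W)]\bigr)\Bigr]
\;\le\;
\bigl(1-\textstyle\prod_i[1-\kappa(W^{*}x_i)]\bigr)\bigl(1-\textstyle\prod_j[1-\kappa(y_j\bar W^{*})]\bigr),
\]
which follows by bounding each concave factor from above by its tangent line at $W^{*}$ (resp.\ $\bar W^{*}$) and using only the three moment conditions $\mathbb{E}(W)\le W^{*}$, $\mathbb{E}(\bar W)\le\bar W^{*}$, $\mathbb{E}(W\bar W)\le W^{*}\bar W^{*}$. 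Because each replacement is done unconditionally, no accumulation of conditioning at a vertex ever occurs, and iterating over the finitely many vertices meeting $E^{(n)}$, then sending $n\to\infty$, gives the theorem.
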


\begin{theorem}
\label{secondimpthm}
Let $\kappa(x,y)=xy$ and $S = [0,1]^2$.
Then, for every $p \leq \mathbb{E}(W\bar{W})$ and for any hoppable collection $\Xi$ of paths in $E$, we have
$$\mathbb{P}(\mathcal{C}^{\Xi}) \geq \mathbb{P}_{p}^{site}(\mathcal{C}^{\Xi}).$$
\end{theorem}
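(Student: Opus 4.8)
The plan is to reduce, using hoppability, to a finite collection of self-avoiding paths, dispose of a single path by a direct moment computation, and handle the union by comparing the $(W,\bar W)$-model with site percolation conditionally on the weights, through an intermediate ``split'' model.

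Since $\Xi$ is hoppable, $\mathcal{C}^\Xi = \lim_n \mathcal{C}^{\Xi_n}$ and each $\mathcal{C}^{\Xi_n}$ depends on only finitely many edges, so $\mathbb{P}(\mathcal{C}^\Xi) = \lim_n \mathbb{P}(\mathcal{C}^{\Xi_n})$ and likewise under $\mathbb{P}^{site}_p$; hence it suffices to prove the inequality when $\Xi$ is a finite collection of finite paths contained in a finite edge set, and then, by weak hoppability (loop-erased paths stay in $\Xi$), we may assume all these paths are self-avoiding. For a single self-avoiding path $\xi = (v_0v_1,\dots,v_{n-1}v_n)$ with $n\ge 1$, the independence of the weights at the distinct vertices $v_0,\dots,v_n$ gives $\mathbb{P}(\xi \text{ open}) = \mathbb{E}[W]\,\mathbb{E}[\bar W]\,(\mathbb{E}[W\bar W])^{n-1}$, whereas $\mathbb{P}^{site}_p(\xi \text{ open}) = p^{\,n+1}$; since $W,\bar W \in [0,1]$, Cauchy--Schwarz together with $W^2\le W$ and $\bar W^2\le \bar W$ gives $p^2 \le (\mathbb{E}[W\bar W])^2 \le \mathbb{E}[W^2]\,\mathbb{E}[\bar W^2] \le \mathbb{E}[W]\,\mathbb{E}[\bar W]$, so $\mathbb{P}(\xi\text{ open}) \ge p^{\,n+1}$, which settles the single-path case (the case $n=0$ being trivial).

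To treat unions, write $q := \mathbb{E}[W\bar W]$. Site percolation is monotone in its density, and site percolation in which the vertex-densities are i.i.d.\ copies of the random variable $W\bar W$ is exactly $\mathbb{P}^{site}_q$, because the resulting vertex states are i.i.d.\ $\mathrm{Bern}(q)$. Thus $\mathbb{P}^{site}_p(\mathcal{C}^\Xi) \le \mathbb{P}^{site}_q(\mathcal{C}^\Xi) = \mathbb{E}[\pi(\mathcal{C}^\Xi)]$, where $\pi$ denotes, conditionally on the weights, site percolation with vertex-densities $W_v\bar W_v$; and $\mathbb{P}(\mathcal{C}^\Xi) = \mathbb{E}[\beta(\mathcal{C}^\Xi)]$, where $\beta$ denotes, conditionally on the same weights, bond percolation with edge-densities $W_u\bar W_v$. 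Hence it suffices to show, for every realization of the weights, that $\beta(\mathcal{C}^\Xi) \ge \pi(\mathcal{C}^\Xi)$ for the increasing event $\mathcal{C}^\Xi$. For this I would insert the intermediate ``split'' model $\gamma$: give each vertex $v$ independent marks $P_v\sim\mathrm{Bern}(W_v)$ and $Q_v\sim\mathrm{Bern}(\bar W_v)$, all independent over $v$, and declare $uv$ open iff $P_u = Q_v = 1$. Taking the site mark of $v$ to be $P_vQ_v$ exhibits every $\pi$-open path as $\gamma$-open on the same space, so $\pi(\mathcal{C}^\Xi) \le \gamma(\mathcal{C}^\Xi)$; and $\gamma$ has the same per-edge density $W_u\bar W_v$ as $\beta$, so what remains is $\gamma(\mathcal{C}^\Xi) \le \beta(\mathcal{C}^\Xi)$ --- i.e.\ that replacing the per-vertex marks $P_u, Q_v$ by independent per-edge marks can only help percolation of a collection of self-avoiding paths. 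I would prove this by a dynamical exploration of a $\gamma$-open path: when the exploration first reaches a vertex $v$ its ``receiver'' mark $Q_v$ is matched to the fresh susceptibility coin of the unique edge along which $v$ is entered (no conflict, since each vertex is entered once), and whenever the exploration would revisit a vertex already used by another path of $\Xi$ we hop --- this is where weak hoppability enters --- so that along the explored path each vertex is also left by a single edge and its ``sender'' mark $P_u$ can likewise be matched to that edge's fresh infectivity coin; the remaining edge-coins are taken independent.

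The main obstacle is precisely the step $\gamma(\mathcal{C}^\Xi) \le \beta(\mathcal{C}^\Xi)$: in $\gamma$ the infectivity of a vertex is a single Bernoulli variable governing all of its out-edges at once, and there is no edgewise coupling of this into the independent out-edges of $\beta$; so the proof has to genuinely use that the competing paths are self-avoiding and that the collection is hoppable in order to reduce, at each vertex of the explored path, the per-vertex infectivity to a single per-edge test, and one must then check that the freshly sampled remaining edge-coins leave the edge configuration with the correct product law.
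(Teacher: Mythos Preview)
Your route is genuinely different from the paper's. The paper does not go through any intermediate split model or coupling: it proves a single vertex-by-vertex replacement lemma (Theorem~\ref{zerofuncthm}) comparing ``zero functions'' $z_v$, and then deduces Theorem~\ref{secondimpthm} from the elementary pointwise inequality
\[
1-\prod_{i=1}^{|A|}(1-u x_i)\ \ge\ u\Bigl(1-\prod_{i=1}^{|A|}(1-x_i)\Bigr),\qquad u\in[0,1],
\]
applied with $u=W$ and with $u=\bar W$. That gives immediately $\mathbb{E}\bigl[(1-\prod(1-Wx_i))(1-\prod(1-\bar W y_j))\bigr]\ge \mathbb{E}[W\bar W](1-\prod(1-x_i))(1-\prod(1-y_j))$, which is precisely $z_v(\mathbb{P})\le z_v(\mathbb{P}^{site}_p)$, and Theorem~\ref{zerofuncthm} (whose proof is itself a vertex-by-vertex conditioning argument, not an exploration) does the rest. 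No single-path computation, no conditional-on-weights comparison, no split model.

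Your reductions $\mathbb{P}^{site}_p\le\mathbb{P}^{site}_q=\mathbb{E}[\pi]$ and $\pi\le\gamma$ are correct and clean. The genuine gap is exactly where you place it: the inequality $\gamma(\mathcal{C}^\Xi)\le\beta(\mathcal{C}^\Xi)$. Two points about this step. First, observe that $\gamma$ is itself an instance of the paper's model with (vertex-dependent, $\{0,1\}$-valued, independent) weights $(P_v,Q_v)$, and $\beta$ is bond percolation with the matching edge densities; so what you are trying to prove here is nothing other than (the non-identically-distributed case of) Theorem~\ref{firstimpthm}. In effect your argument reduces Theorem~\ref{secondimpthm} to Theorem~\ref{firstimpthm}, which is a legitimate reduction but not a self-contained proof. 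Second, the exploration coupling you sketch does not close: the moment $P_u=0$ the exploration at $u$ halts in $\gamma$ and you never choose which out-edge of $u$ to ``match'' $P_u$ to, yet the law of the $\beta$ configuration on the unmatched out-edges of $u$ must still be the correct product law for the coupling to be valid. More seriously, to decide $\mathcal{C}^\Xi$ you must in general test several out-edges from the same vertex (think of two paths in $\Xi$ that branch at $u$); hoppability lets you merge paths that \emph{meet}, but it does not prevent the exploration from having to examine two out-edges of a vertex that is the first point of divergence. At such a vertex a single $P_u$ cannot be matched to two fresh edge-coins, and the positive dependence this creates is exactly what has to be controlled. This is precisely the content the paper isolates in its zero-function comparison (and Remark~\ref{tegenvoorbeeld} shows it fails without hoppability), so your exploration would have to re-derive that comparison in disguise.

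In short: drop the single-path paragraph (it is subsumed by the general case), and either invoke Theorem~\ref{firstimpthm}/Theorem~\ref{zerofuncthm} for the step $\gamma\le\beta$, or replace the exploration by the paper's vertex-by-vertex conditioning argument, which handles the branching issue cleanly via the zero-function inequality above.
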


\noindent
Theorems \ref{firstimpthm} and \ref{secondimpthm} are corollaries of the forthcoming Theorem \ref{zerofuncthm}.

We will first illustrate these results by a numerical example, and then by discussing the situation on a tree. In this latter example, we
also shed some light on the reason why $E(W\bar{W})$ and $E(W)E(\bar{W})$ play an important role in our analysis.

\begin{example}
{\rm Suppose that $W = \bar{W}$ is uniformly distributed on $(a,1)$ for some $a \in (0,1)$, $G = \mathbb{L}^2$ is the square lattice with nearest neighbour (directed) edges and $\kappa(x,y)=xy$. If $a \leq \sqrt{3/4}-1/2 \approx 0.37$, then $\mathbb{E}(W\bar{W}) = \mathbb{E}(W^2) \leq 1/2$. Because the critical value for independent bond percolation is $1/2$ \cite{Grim99}, the probability that the cluster of individuals that can be reached by an open path from the origin is infinite is 0 for this model.
Let $p_c^{site}(\mathbb{L}^2) \approx 0.59$ be the critical value for site percolation on $\mathbb{L}^2$.  If $a > \frac{\sqrt{12 p_c^{site}(\mathbb{L}^2)-3}-1}{2} \approx 0.51$ then the probability that the cluster of individuals that can be reached by an open path from the origin is infinite is positive for this model.}
\end{example}

\begin{example}
{\rm
Let $G= (V,E)$ be the rooted tree in which all vertices have out-degree $d$, and in-degree 1, apart from the root $v_0$ which
has in-degree 0.
We say that the root is the generation-$0$ vertex; if there is a path of length $n$ from the root to vertex $v$, then $v$ is a said to be a generation-$n$ vertex.

Let $\kappa(x,y)=xy$, and let $\Xi_k$ be the set of all paths of length $k$ starting at the root. Furthermore, let $Z_k$ be the number of open paths in $\Xi_k$, i.e., $Z_k$ is the number of generation-$k$ vertices that can be reached by an open path from the root.
For $k \geq 1$, and generation-1 vertex $v_1$, we define $Y_k(v_1)$ as the number of generation-$k$ vertices $v_k$, for which there is an open path from $v_1$ to $v_k$, conditioned on $W_{v_0} =1$ and $\bar{W}_{u} =1$ for all generation-$k$ vertices $u$.

We claim that $Y_k(v_1)$ is distributed as the size of the $(k-1)$-th generation of a Galton-Watson process \cite{Jage75} starting with one individual, in which individuals have no offspring with probability $\mathbb{E}(1-\bar{W}) + \mathbb{E}(\bar{W}(1-W)^d)$ and offspring of size $j$ with probability $\mathbb{E}( \bar{W} {d \choose j} W^j (1-W)^{d-j})$, for $0< j \leq d$. Indeed, we can adopt the point of view that for a vertex to have any offspring, it first has to accept the
disease from its predecessor in the tree, and if they do so, in addition need to send the disease to the next generation.
This leads to an offspring mean of $d \mathbb{E}( \bar{W}W)$.

It now seems quite natural, given the computation above, to consider a class of probability measures for which $\mathbb{E}(W\bar{W})$ is constant (compare \cite{Beck98,Beck02}). However, in order to proof the inequality  $\mathbb{P}(\mathcal{C}^{\Xi}) \leq \mathbb{P}_{p}^{bond}(\mathcal{C}^{\Xi})$ for all hoppable collections of paths in $E$, we need the additional assumptions that $p \geq \mathbb{E}(W)\mathbb{E}(\bar{W})$. This can be seen by assuming that $\Xi$ consists of a single edge. The marginal probability that this edge is open, is given by  $\mathbb{E}(W)\mathbb{E}(\bar{W})$. This explains, to some extent, the importance of the quantities $\mathbb{E}(W\bar{W})$ and
$\mathbb{E}(W)\mathbb{E}(\bar{W})$.
}
\end{example}

\section{Discussion}
\label{remarks}
Before we start proving the results, we collect in this section a number of remarks.

\begin{itemize}
\item Let $\kappa(x,y)=xy$. The (directed) edge density in our percolation model is $\mathbb{E}(W)\mathbb{E}(\bar{W})$. It is not hard to check
that under $\mathbb{P}_p^{site}$, the directed edge density is $p^2$, and under $\mathbb{P}_p^{bond}$ it is $p$. In
Theorem \ref{secondimpthm} therefore, we compare a model with edge density $\mathbb{E}(W)\mathbb{E}(\bar{W})$ with a model
with edge density at most ($\mathbb{E}(W\bar{W}))^2$. Note that the latter density is at most the former density. On the
other hand, in Theorem \ref{firstimpthm}, the edge density at the right is at least as large as the edge density at the left.

\item Contrary to Kuulasmaa \cite{Kuul82} and Miller \cite{Mill08}, we deal with infectious diseases for which the susceptibility and infectivity of individuals might be dependent. This dependence complicates the proofs and makes that we cannot apply the results from \cite{Kuul82} and \cite{Mill08} immediately. If $\kappa(x,y)$ is not factorisable, then Millers result for independent $W$ and $\bar{W}$ is slightly stronger than our result. As he provides a bond percolation upper bound with parameter $p= \mathbb{E}(\kappa(W\bar{W}))$, which for concave $\kappa$ is bounded above by our bond percolation parameter $\kappa(\mathbb{E}(W\bar{W}))$.

\item The class of possible collections $\Xi$ for which the result is true, is larger than the
class of hoppable collections. We have chosen for this formulation because it contains most collections of
interest, and also because of its elegance. One class of paths that can be seen to satisfy the results is
the class of infinite {\em backwards} paths ending at a given vertex $v$. Indeed, by simply interchanging the
role of $W$ and $\bar{W}$ (see also \cite{Mill08} for this trick), it is easy to see that our results are valid for this class as well.

\item Suppose that $\kappa(x,y)=xy$. By choosing $\Xi$ as the collection of paths from $u$ to $v$, we obtain that among all
measure with $\mathbb{E}(W\bar{W}) \geq \mathbb{E}(W) \mathbb{E}(\bar{W})$, it is the case that
$\mathbb{E}[\ind(u \leadsto v)] = \mathbb{P}(u \leadsto v)$
is at most $\mathbb{P}^{bond}_{\mathbb{E}(W\bar{W})}(u \leadsto v)$ and at least $\mathbb{P}^{site}_{\mathbb{E}(W\bar{W})}(u \leadsto v)$. Let $C_u$ be the set of vertices that can be reached by an open path from vertex $u$. The observations above give $$\mathbb{E}(|C_u|) = \sum_{v \in V} \mathbb{P}(u \leadsto v) \leq \sum_{v \in V} \mathbb{P}^{bond}_{\mathbb{E}(W\bar{W})}(u \leadsto v) = \mathbb{E}^{bond}_{\mathbb{E}(W\bar{W})}(|C_u|).$$

\item It is not possible to use straightforward stochastic domination arguments to prove the theorems in their full generality. This can be seen by considering $\kappa(x,y)=xy$ and uncorrelated $W$ and $\bar{W}$. In that case the marginal probability that any edge $uv \in E$ is open is the same for all measures for which $\mathbb{E}(W\bar{W})$ is constant. However, the edge density in $\mathbb{P}^{bond}_{\mathbb{E}(W\bar{W})}$ is also $\mathbb{E}(W\bar{W})$ and hence stochastic domination cannot be used to
prove Theorem \ref{firstimpthm}.

\item If $E$ is symmetric, i.e., $uv \in E \Leftrightarrow vu \in E$, and if $\mathbb{P}(W=\bar{W}) =1$ and $\kappa(x,y) = \kappa(y,x)$,
then the law of the cluster of vertices that can be reached by open paths from $v \in V$ on $G$ is the same as the law of the open cluster containing $v$ on the undirected counterpart of $G$ (the graph obtained by replacing the two edges connecting the same vertices by 1 undirected edge) \cite{Cox88}. Hence many questions on undirected graphs can be addressed as questions on directed graphs.

\item We can compare some of our results with bounds given in \cite{Bali05}: any undirected 1-dependent edge
percolation model on the 2-dimensional square lattice, with marginal probability for an edge to be open at least 0.8639 is supercritical. Since
this bound holds for all 1-dependent measures, it is to be expected that our bounds improve on
this in our specific model. Since we can only compare undirected models, we can only compare in the
symmetric case (which is perhaps not the most interesting one). Still, since $\mathbb{P}_p^{site}$ percolates above $p=0.68$ \cite{Wier94}
(this is a rigorous bound, the correct value of the critical probability is around $0.59$), we
see that our model percolates when $\mathbb{P}(W=\bar{W}) =1$, $\kappa(x,y)=xy$ and $\mathbb{E}(W\bar{W}) =\mathbb{E}(W^2)\geq 0.68$. Hence we improve on the general bound when $\mathbb{E}(W^2) \geq 0.68$ and $(\mathbb{E}(W))^2 < 0.8639$.

\item Our model is a generalization of many other percolation processes, such as the locally dependent random graph model \cite{Kuul82}, mixed percolation \cite{Chay00}, generalized random graphs \cite{Brit06}, and Poissonian random graphs \cite{Norr06}, where the two latter ones were previously only defined for finite complete graphs $G$.
The inhomogeneous random graphs of \cite{Boll07} are only defined on complete graphs, but they are more general than our model on the complete graph.
\item Theorems \ref{firstimpthm} and \ref{secondimpthm} can easily be generalised to models where the vectors $(W_v,\bar{W}_v)$ are independent, but not necessarily identically distributed. Furthermore, the assumption that $\kappa(x,y) = \kappa(xy)$ in  Theorem \ref{firstimpthm} may be replaced by: for every $x$, $\kappa(x,y)$ is non-decreasing and concave in $y$ and for every $y$, $\kappa(x,y)$ is non-decreasing and concave in $x$.
\end{itemize}

\section{Proofs}
\label{factres}
Let $S_1$ be the projection of $S$ in the first coordinate direction, and
$S_2$ the projection of $S$ in the second coordinate direction.
Let $E'_v$ be the set of all edges starting at $v$, and $E^*_v$ the set of all edges ending at $v$.
For any pair of (possibly empty) sets $A \subset E'_u$ and $B \subset E^*_u$, any $|A|$-dimensional vector $\mathbf{x} = (x_1,x_2,\ldots,x_{|A|}) \in (S_2)^{|A|}$ and any $|B|$-dimensional vector $\mathbf{y} = (y_1,y_2, \ldots , y_{|B|}) \in (S_1)^{|B|}$ we define the \textit{zero function} $z_u(\mathbb{P};A,B; \mathbf{x},\mathbf{y})$ as the probability that either none of the edges in $A$ are open or none of the edges in $B$ are open if the weights assigned to the endpoints of the edges in $A$ (resp.\ $B$) are the elements of the vector $\mathbf{x}$ (resp.\ $\mathbf{y}$).

In formulas this reads
$$
z_{u}(\mathbb{P}; A,B; \mathbf{x},\mathbf{y}) :=  \mathbb{E}\left(1 - [1-\prod_{i=1}^{|A|}(1-\kappa(W,x_i))][1-\prod_{j=1}^{|B|}(1-\kappa(y_j,\bar{W}))]\right),
$$
if $|A||B|>0$. For the remaining cases we define
\begin{eqnarray*}
z_{u}(\mathbb{P}; A,\emptyset ; \mathbf{x},\emptyset) & := & \mathbb{E}\left(\prod_{i=1}^{|A|}(1-\kappa(W,x_i))\right), \, \mbox{if $|A|>0$}, \\
z_{u}(\mathbb{P}; \emptyset,B; \emptyset,\mathbf{y}) & := & \mathbb{E}\left(\prod_{j=1}^{|B|}(1-\kappa(y_j,\bar{W}))\right), \, \mbox{if $|B|>0$},\\
z_{u}(\mathbb{P}; \emptyset ,\emptyset; \emptyset ,\emptyset) & := & 1.
\end{eqnarray*}
If the graph is transitive we do not need the reference to the vertex $u$ in the zero function.

In the epidemiological setting, the zero function $z_u(\mathbb{P};A,B; \mathbf{x},\mathbf{y})$ is the (conditional) probability that if all endpoints of edges in $B$ become infected and have ``$W$-weights'' $y_1,\ldots , y_{|B|}$, either $u$ will not get infected via an edge in $B$, or $u$ will not transmit the disease to any of the endpoints of edges in $A$, if those endpoints have ``$\bar{W}$-weights'' $x_1,\ldots ,x_{|A|}$.

We write $z_{v}(\mathbb{P}^{(a)}) \geq z_{v}(\mathbb{P}^{(b)})$ if $z_{v}(\mathbb{P}^{(a)};A,B;\mathbf{x},\mathbf{y}) \geq z_{v}(\mathbb{P}^{(b)};A,B;\mathbf{x},\mathbf{y}))$ for all $A \subset E_u$, all $B\subset E^*_u$, all $\mathbf{x} \in (S_2)^{|A|}$ and all $\mathbf{y} \in (S_1)^{|B|}$. The following result is interesting in its own right, and will be the main tool
to prove Theorem \ref{firstimpthm}.

\begin{theorem}\label{zerofuncthm}
If $z_{v}(\mathbb{P}^{(a)}) \leq z_{v}(\mathbb{P}^{(b)})$ for all $v \in V$, then for any
hoppable collection $\Xi$ of paths,
\begin{displaymath}
\mathbb{P}^{(b)}(\mathcal{C}^{\Xi}) \leq \mathbb{P}^{(a)}(\mathcal{C}^{\Xi}).
\end{displaymath}
\end{theorem}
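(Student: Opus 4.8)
The plan is to reduce to a finite graph, prove a structural description of $\mathcal{C}^{\Xi}$ after conditioning on the ``environment'' of a single vertex, and then interpolate vertex by vertex between $\mathbb{P}^{(b)}$ and $\mathbb{P}^{(a)}$. For the reduction: since $\Xi$ is hoppable, $\mathbf{1}_{\mathcal{C}^{\Xi_n}}\to\mathbf{1}_{\mathcal{C}^{\Xi}}$ along the relevant enumeration, so by bounded convergence $\mathbb{P}^{(a)}(\mathcal{C}^{\Xi_n})\to\mathbb{P}^{(a)}(\mathcal{C}^{\Xi})$ and likewise for $\mathbb{P}^{(b)}$; hence it suffices to prove $\mathbb{P}^{(b)}(\mathcal{C}^{\Xi_n})\le\mathbb{P}^{(a)}(\mathcal{C}^{\Xi_n})$ for each fixed $n$, where $\Xi_n$ involves only the finitely many edges of $E^{(n)}$ and is still weakly hoppable. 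I would then adjoin a virtual ``source'' $\star$, joined by an always-open edge to every vertex that starts a path of $\Xi_n$, and a virtual ``sink'' $\bullet$, reached by an always-open edge from every vertex that ends one, and pass to the obvious lifted collection $\tilde\Xi_n$: this does not change the event ($\mathcal{C}^{\Xi_n}=\mathcal{C}^{\tilde\Xi_n}$), keeps weak hoppability, and has the convenient feature that every \emph{ordinary} vertex lying on a path of $\tilde\Xi_n$ lies on it in the interior.

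\emph{The structural lemma.} Fix an ordinary vertex $v$ and let $F$ be the environment away from $v$: the weights of all vertices other than $v$, together with the states of all edges not incident to $v$. I claim that, conditionally on $F$, the event $\mathcal{C}^{\tilde\Xi_n}$ is one of: the sure event; the empty event; or $\{\text{some edge of }\mathcal{I}\text{ is open}\}\cap\{\text{some edge of }\mathcal{O}\text{ is open}\}$, where $\mathcal{I}\subseteq E^*_v$ and $\mathcal{O}\subseteq E'_v$ are read off from $F$ ($\mathcal{I}$ being the edges into $v$, and $\mathcal{O}$ the edges out of $v$, that can act as the $v$-entering resp.\ $v$-leaving edge of a path of $\tilde\Xi_n$ all of whose other edges are open in $F$). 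The only nontrivial point is that an open $e^-\in\mathcal{I}$ together with an open $e^+\in\mathcal{O}$ \emph{force} $\mathcal{C}^{\tilde\Xi_n}$, and this is exactly where weak hoppability is used: we splice the live initial segment through $v$ witnessing $e^-$ to the live final segment through $v$ witnessing $e^+$, obtaining a single path of $\tilde\Xi_n$ that is open. Consequently $\mathbb{P}(\neg\mathcal{C}^{\tilde\Xi_n}\mid F)$ is either $0$, or $1$, or exactly the zero function $z_v(\mathbb{P};\mathcal{O},\mathcal{I};\mathbf x,\mathbf y)$, where $\mathbf x$ lists the susceptibilities of the heads of the edges of $\mathcal{O}$ and $\mathbf y$ the infectivities of the tails of the edges of $\mathcal{I}$ — all determined by $F$ and not involving the weight at $v$ or the edges at $v$; when one of $\mathcal I,\mathcal O$ is empty (or the corresponding side is automatically satisfied in $F$) this is the degenerate form of the zero function with $A$ or $B$ empty.

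\emph{Interpolation.} Enumerate the ordinary vertices $v_1,\dots,v_N$ and let $\mathbb{P}_k$ be the model in which $v_1,\dots,v_k$ carry the weight law of $\mathbb{P}^{(a)}$ and $v_{k+1},\dots,v_N$ that of $\mathbb{P}^{(b)}$, an ordinary edge being governed as in $\mathbb{P}^{(a)}$ exactly when both its endpoints lie among $v_1,\dots,v_k$ and as in $\mathbb{P}^{(b)}$ otherwise (virtual edges staying deterministically open); thus $\mathbb{P}_0=\mathbb{P}^{(b)}$ and $\mathbb{P}_N=\mathbb{P}^{(a)}$. Passing from $\mathbb{P}_{k-1}$ to $\mathbb{P}_k$ alters only the weight law at $v_k$ and the law of the edges incident to $v_k$, so the environment $F$ of $v_k$ has the same distribution under $\mathbb{P}_{k-1}$ and $\mathbb{P}_k$, and $\mathcal I,\mathcal O,\mathbf x,\mathbf y$ of the structural lemma are the same deterministic functions of $F$ in both. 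Applying the lemma at $v=v_k$: in the degenerate cases $\mathbb{P}_{k-1}(\neg\mathcal{C}^{\tilde\Xi_n}\mid F)=\mathbb{P}_k(\neg\mathcal{C}^{\tilde\Xi_n}\mid F)\in\{0,1\}$, and otherwise $\mathbb{P}_{k-1}(\neg\mathcal{C}^{\tilde\Xi_n}\mid F)=z_{v_k}(\mathbb{P}^{(b)};\mathcal O,\mathcal I;\mathbf x,\mathbf y)\ge z_{v_k}(\mathbb{P}^{(a)};\mathcal O,\mathcal I;\mathbf x,\mathbf y)=\mathbb{P}_k(\neg\mathcal{C}^{\tilde\Xi_n}\mid F)$ by the hypothesis of the theorem. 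Taking expectations over $F$ gives $\mathbb{P}_{k-1}(\mathcal{C}^{\tilde\Xi_n})\le\mathbb{P}_k(\mathcal{C}^{\tilde\Xi_n})$, and chaining over $k$ gives $\mathbb{P}^{(b)}(\mathcal{C}^{\tilde\Xi_n})\le\mathbb{P}^{(a)}(\mathcal{C}^{\tilde\Xi_n})$, as required.

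\emph{The main obstacle.} The crux is the structural lemma, and within it the splicing step: one must check that weak hoppability makes the collection rich enough that, given the environment, a vertex influences $\mathcal{C}^{\Xi}$ \emph{only} by acting as a transit point for a pair (incoming edge, outgoing edge), so that the conditional non-percolation probability collapses precisely onto the zero function and not onto some more delicate functional of the law at that vertex; the virtual source and sink are introduced exactly so that path endpoints do not disrupt this bookkeeping. A smaller point is that, when $\mathbb{P}^{(a)}$ and $\mathbb{P}^{(b)}$ carry \emph{different} connection functions, the interpolation must be arranged so that processing $v_k$ switches the connection function on the relevant edges at $v_k$ uniformly together with the weight law there; this is automatic in every intended application, where bond and site percolation are first realised as instances of the $\kappa$-model with deterministic weights, so one only ever compares two models sharing $\kappa$.
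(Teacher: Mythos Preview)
Your argument is correct and follows the same three-step plan as the paper: reduce to the finite approximations $\Xi_n$ via the hoppability limit, show that conditional on the environment away from a single vertex the complement of $\mathcal{C}^{\Xi_n}$ has probability equal to a zero function, and interpolate one vertex at a time. Your virtual source/sink device is a cosmetic unification of the paper's separate treatment of the cases where the distinguished vertex is a start vertex, an end vertex, or an interior vertex of paths in $\Xi_n$; apart from that (and the harmless imprecision in how you phrase the hybrid measures $\mathbb{P}_k$, which in the paper's setup share a single $\kappa$ so that only the weight law at $v_k$ changes), the two proofs coincide.
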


\begin{remark}
\label{tegenvoorbeeld}
{\rm Theorem \ref{zerofuncthm} does not hold if we would allow for all collections of paths $\Xi$. Indeed, here is a counterexample.
Let $G$ be the subgraph of the 2-dimensional square lattice, consisting of the origin and its nearest neighbours (with nearest neighbour edges).
Let $\kappa(x,y) = xy$ and the weights assigned to the neighbours of the origin are all equal to 1.
We consider two measures $\mathbb{P}^{(a)}$ and $\mathbb{P}^{(b)}$ on the weights assigned to the origin:

\noindent
$\mathbb{P}^{(a)}(W=\bar{W}=0) =3/5$, $\mathbb{P}^{(a)}(W= 1/2, \bar{W}=1) = \mathbb{P}^{(a)}(W= 1, \bar{W}=1/2) =1/5$, and\\
$\mathbb{P}^{(b)}(W= 0, \bar{W}=1/2) = \mathbb{P}^{(b)}(W= 0, \bar{W}=1) = \mathbb{P}^{(b)}(W= 1/2, \bar{W}=0) = \mathbb{P}^{(b)}(W= 1, \bar{W}=0) = \mathbb{P}^{(b)}(W= 1, \bar{W}=1) = 1/5$.

A small computation shows that for $|A||B| \geq 1$,
\begin{multline*}
z_0(\mathbb{P}^{(a)};A,B;\mathbf{x},\mathbf{y}) =  1 - (1/5) (2^{-|A|} + 2^{-|B|}) \prod_{i=1}^{|A|} x_i  \prod_{j=1}^{|B|} x_j \geq\\
\geq  =   1 - (1/5) \prod_{i=1}^{|A|} x_i  \prod_{j=1}^{|B|} x_j   = z_0(\mathbb{P}^{(b)};A,B;\mathbf{x},\mathbf{y}),
\end{multline*}
and for $|A| >0$,
\begin{multline*}
z_0(\mathbb{P}^{(a)}; A,\emptyset; \mathbf{x},\emptyset) = 3/5 + (1/5)  \prod_{i=1}^{|A|} (1-x_i/2) + (1/5)  \prod_{i=1}^{|A|} (1-x_i) \geq\\
\geq 2/5 + (1/5)  \prod_{i=1}^{|A|} (1-x_i/2) + (2/5)  \prod_{i=1}^{|A|} (1-x_i) = z_0(\mathbb{P}^{(b)}; A,\emptyset; \mathbf{x},\emptyset),
\end{multline*}
while for $|B| > 0$,
\begin{multline*}
z_0(\mathbb{P}^{(a)}; \emptyset, B; \emptyset,\mathbf{y}) = 3/5 + (1/5)  \prod_{i=1}^{|B|} (1-y_i/2) + (1/5)  \prod_{i=1}^{|B|} (1-y_i) \geq\\
\geq 2/5 + (1/5)  \prod_{i=1}^{|B|} (1-y_i/2) + (2/5)  \prod_{i=1}^{|B|} (1-y_i) = z_0(\mathbb{P}^{(b)}; \emptyset, B; \emptyset,\mathbf{y}).
\end{multline*}
Hence we have that $z_0(\mathbb{P}^{(a)}) \geq z_0(\mathbb{P}^{(b)})$.

Now let $\xi$ be the path from $(0,-1)$ to $(0,1)$, let $\phi$ be the path from $(-1,0)$ to $(1,0)$ in $G$ and let
$\Xi = \{\xi,\phi\}$; this is not a hoppable collection. Note that
$\mathbb{P}(\mathcal{C}^{\Xi}) = 1- \mathbb{E}([1-W\bar{W}]^2)$, and a quick computation yields
$$\mathbb{P}^{(a)}(\mathcal{C}^{\Xi}) = 3/10 > 1/5 = \mathbb{P}^{(b)}(\mathcal{C}^{\Xi}).$$}
\end{remark}

In the proof of  Theorem \ref{zerofuncthm} we need the following Lemma:

\begin{lemma}
If $\Xi$ is hoppable, then $\Xi_n$ is hoppable for all $n \in \mathbb{N}$.
\end{lemma}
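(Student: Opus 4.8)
The plan is to verify the two defining properties of hoppability for $\Xi_n$ directly, using the corresponding properties of $\Xi$. First I would check weak hoppability. Recall that $\Xi_n$ consists of two kinds of paths: finite paths all of whose edges lie in $E^{(n)}$, and truncations of infinite paths of $\Xi$ at the first instant they leave $E^{(n)}$ (including trivial paths at vertices where the first edge already lies outside $E^{(n)}$). So suppose $\xi,\phi \in \Xi_n$ both pass through a vertex $v$, with $v$ the end vertex of the $i$-th edge of $\xi$ and the start vertex of the $j$-th edge of $\phi$; I must show $(\xi^s(i),\phi^t(j)) \in \Xi_n$. The key observation is that $(\xi^s(i),\phi^t(j))$ uses only edges of $\xi^s(i)$ and of $\phi^t(j)$, and all of these already belong to $E^{(n)}$ (since $\xi,\phi \in \Xi_n$ means every edge appearing in them lies in $E^{(n)}$). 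Now lift $\xi$ and $\phi$ to paths $\tilde\xi,\tilde\phi \in \Xi$ of which they are the $\Xi_n$-truncations; since $\Xi$ is weakly hoppable, $(\tilde\xi^s(i),\tilde\phi^t(j)) \in \Xi$. Its $\Xi_n$-truncation is precisely $(\xi^s(i),\phi^t(j))$, because the truncation procedure stops at the first edge outside $E^{(n)}$ and the prefix $(\xi^s(i),\phi^t(j))$ is entirely inside $E^{(n)}$ — so this conjunction lies in $\Xi_n$. One must treat the degenerate cases (trivial paths, $\xi=\phi$, the first edge already outside $E^{(n)}$) separately, but these are immediate.

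Next I would establish the limit identity $\mathcal{C}^{\Xi_n} = \lim_{m\to\infty}\mathcal{C}^{(\Xi_n)_m}$, where $(\Xi_n)_m$ is the $m$-th approximation of $\Xi_n$ in the same enumeration of $E$ used for $\Xi$. The point is that $E^{(n)}$ is a finite edge set, so for all $m$ large enough (namely once $E^{(m)} \supseteq E^{(n)}$) the approximation $(\Xi_n)_m$ stabilizes and in fact equals $\Xi_n$ itself: every path of $\Xi_n$ is either finite with all edges in $E^{(n)} \subseteq E^{(m)}$, hence retained unchanged, or a truncation that has already left $E^{(n)} \subseteq E^{(m)}$ and so is truncated at the same place. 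Since $E^{(m)} \supseteq E^{(n)}$ for all sufficiently large $m$, the sequence $\mathcal{C}^{(\Xi_n)_m}$ is eventually constant and equal to $\mathcal{C}^{\Xi_n}$, so the limit trivially holds. Combined with weak hoppability, this shows $\Xi_n$ is hoppable.

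I expect the main obstacle to be purely bookkeeping rather than conceptual: carefully matching the truncation operation with the conjunction operation, and in particular checking that truncating the lifted conjunction $(\tilde\xi^s(i),\tilde\phi^t(j))$ at $E^{(n)}$ gives back exactly $(\xi^s(i),\phi^t(j))$ and not something shorter or longer. One has to be slightly careful that $\xi^s(i)$ might already be all of $\xi$ (if $\xi$ was truncated before reaching its $i$-th edge — but then $v$ is an endpoint and the argument still goes through), and that $\phi^t(j)$ might be trivial. Handling the trivial-path and $i=0$ or $j=0$ boundary cases, together with the possibility that one of $\xi,\phi$ is itself a trivial path of $\Xi_n$, accounts for most of the length of a complete write-up, but none of it is deep.
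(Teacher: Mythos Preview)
Your proposal is correct and follows essentially the same approach as the paper's proof: first reduce to weak hoppability by observing that $\Xi_n$ involves only finitely many edges (the paper states this in one line; you spell out explicitly that $(\Xi_n)_m = \Xi_n$ once $m \geq n$), and then verify weak hoppability by lifting to paths in $\Xi$, applying weak hoppability there, and truncating back. The paper organizes the latter step as a case split on whether $\phi_n$ is a finite member of $\Xi$ or a truncation of an infinite one, which is exactly the bookkeeping you anticipate in your final paragraph.
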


\begin{proof}
Since $\Xi_n$ consists of finitely many edges only,
it is enough to prove that $\Xi_n$ is weakly hoppable, that is,
for any $v$ and any two paths $\xi_n, \phi_n \in \Xi_n$ going through
$v$, (say that $v$ is the end vertex of the $i$-th edge of $\xi$ and the start vertex of the $j$-th edge of $\phi$),  the conjunction $(\xi_n^{s}(i),\phi_n^{t}(j))$ is in $\Xi_n$ as well. We distinguish various cases.
\begin{enumerate}
\item If $\phi_n$ is the truncations of the infinite path $\phi \in \Xi$, then $(\xi_n^{s}(i),\phi_n^{t}(j))$ is the truncation of $(\xi_n^{s}(i),\phi^{t}(j))$  at the first time this paths leaves $E_n$. All of its edges are in $E_n$. By the weak hoppability of $\Xi$ this conjunction is in $\Xi_n$ as well.
\item If $\xi_n$ is the truncation of the infinite path $\xi \in \Xi$ and $\phi_n \in \Xi$, then $(\xi_n^{s}(i),\phi_n^{t}(j))$ contains only edges in $E_n$ and is in $\Xi$ by the weak hoppability of $\Xi$ and in $\Xi_n$ by the definition of $\Xi_n$.
\item If $\xi_n$ and $\phi_n$ are both finite paths in $\Xi$, then $(\xi_n^{s}(i),\phi_n^{t}(j))$ contains only edges in $E_n$ and is in $\Xi$ by the weak hoppability of $\Xi$, and in $\Xi_n$ by definition.
\end{enumerate}
\end{proof}

\noindent \begin{proof}[Proof of Theorem \ref{zerofuncthm}]

$\textbf{(i)}$ The first step is to prove that for all $n \geq 1$, $\mathbb{P}^{(a)}(\mathcal{C}^{\Xi_n}) \geq \mathbb{P}^{(b)}(\mathcal{C}^{\Xi_n})$, if there is a $u \in V$ such that $z_{u}(\mathbb{P}^{(a)}) \leq z_{u}(\mathbb{P}^{(b)})$, and such that $z_{v}(\mathbb{P}^{(a)}) = z_{v}(\mathbb{P}^{(b)})$ for all $v \in V \setminus \{u\}$.

Let $E^{(n)}_u$ be the set of edges in $E^{(n)}$ with $u$ as start or end vertex.
Let $\sigma_u^{(n)}$ be a realisation of the weights outside $u$, together with the states (open or closed) of the edges in
$E^{(n)}\backslash E^{(n)}_u$. The space of configuration restricted to these quantities is denoted  $\hat{\Omega}_u^{(n)}$.
Informally, $\sigma_u^{(n)}$ contains all information which does not depend on $(W_u,\bar{W}_u)$.

Now there are trhee possibilities.
\begin{enumerate}
\item  $\sigma_u^{(n)}$ is such that, no matter what the states (open or closed) of the edges in $E^{(n)}_u$ is, no path in $\Xi_n$ will be open.
\item $\sigma_u^{(n)}$ is such that it contains an open path in $\Xi_n$ of which none of the edges has $u$ as a starting or end-vertex.
\item Conditioned on $\sigma_u^{(n)}$, if all edges in $E^{(n)}_u$ are open, then there is an open path in $\Xi_n$, while if they are all closed, there is no such open path.
\end{enumerate}
The final case may be split into three further cases, where we use that we only have to consider self-avoiding paths, because if $\Xi_n$ is weakly hoppable, loop erased paths of non self-avoiding paths in $\Xi_n$ are also in $\Xi_n$.

\begin{itemize}
\item[(i)]If $u$ is the starting vertex of a self-avoiding path in $\Xi_n$, then there is a set $A \subset E^{(n)}_u$, such that if at least one of the edges in $A$ is open, then there will be an open path in $\Xi_n$, while if the only open edges in  $E^{(n)}_u$ are  $E^{(n)}_u \setminus A$, then there will be no open path in $\Xi_n$.
\item[(ii)] if $u$ is the end-vertex of a path in $\Xi_n$, then there is a set $B \subset E^{(n)}_u$, such that if at least one of the edges in $B$ is open, then there will be an open path in $\Xi_n$, while if the only open edges in  $E^{(n)}_u$ are  $E^{(n)}_u \setminus B$, then there will be no open path in $\Xi_n$.
\item[(iii)] If $u$ is neither the starting nor the end vertex of a path in $\Xi_n$, then there are sets $A,B \subset  E^{(n)}_u$ such that if at least one edge in $A$ is open and at least one edge in $B$ is open, then their will be an open path in $\Xi_n$, while if either all edges in $A$ are closed or all edges in $B$ are closed, then  there will be no open path in $\Xi_n$.
Here $A$ is the set of all starting edges of tails of paths in $\Xi_n$ cut at vertex $u$, of which all edges, apart from the starting edge are open. While $B$ is the set of all final edges of truncations of paths in $\Xi_n$ cut at vertex $u$, of which all edges, apart from the final edge are open.
\end{itemize}

Observe that for $i=a,b$,
$$
\mathbb{P}^{(i)}(\mathcal{C}^{\Xi_n}) = \int_{\hat{\Omega}_u^{(n)}} \mathbb{P}^{(i)}(\mathcal{C}^{\Xi_n}|\sigma_u^{(n)}) d\mathbb{P}^{(i)}(\sigma_u^{(n)}).
$$
By the assumption $z_{v}(\mathbb{P}^{(a)}) = z_{v}(\mathbb{P}^{(b)})$ for all $v \in V \setminus \{u\}$, this implies that
\begin{eqnarray*}
\mathbb{P}^{(a)}(\mathcal{C}^{\Xi_n}) - \mathbb{P}^{(b)}(\mathcal{C}^{\Xi_n})
& = & \displaystyle\int_{\hat{\Omega}_u^{(n)}} (\mathbb{P}^{(a)}(\mathcal{C}^{\Xi_n}|\sigma_u^{(n)})-
\mathbb{P}^{(b)}(\mathcal{C}^{\Xi_n}|\sigma_u^{(n)}))d\mathbb{P}^{(a)}(\sigma_u^{(n)}).
\end{eqnarray*}

We now split the space of all possible realisations of $\sigma_u^{(n)}$ into the cases discussed above.

\begin{enumerate}
\item $\sigma_u^{(n)}$ is such that $\mathbb{P}^{(a)}(\mathcal{C}^{\Xi_n}|\sigma_u^{(n)}) = \mathbb{P}^{(b)}(\mathcal{C}^{\Xi_n}|\sigma_u^{(n)}) =0$, in which case
$$
(\mathbb{P}^{(a)}(\mathcal{C}^{\Xi_n}|\sigma_u^{(n)})-
\mathbb{P}^{(b)}(\mathcal{C}^{\Xi_n}|\sigma_u^{(n)}))= 0.
$$
\item $\sigma_u^{(n)}$ is such that $\mathbb{P}^{(a)}(\mathcal{C}^{\Xi_n}|\sigma_u^{(n)}) = \mathbb{P}^{(b)}(\mathcal{C}^{\Xi_n}|\sigma_u^{(n)}) =1$, in which case
$$
(\mathbb{P}^{(a)}(\mathcal{C}^{\Xi_n}|\sigma_u^{(n)})-
\mathbb{P}^{(b)}(\mathcal{C}^{\Xi_n}|\sigma_u^{(n)}))= 0.
$$
\item
$\sigma_u^{(n)}$ is such that neither $\mathbb{P}^{(a)}(\mathcal{C}^{\Xi_n}|\sigma_u^{(n)}) = \mathbb{P}^{(b)}(\mathcal{C}^{\Xi_n}|\sigma_u^{(n)}) =1$ nor
$\mathbb{P}^{(a)}(\mathcal{C}^{\Xi_n}|\sigma_u^{(n)}) = \mathbb{P}^{(b)}(\mathcal{C}^{\Xi_n}|\sigma_u^{(n)}) =0$.
\end{enumerate}
In the last case there are three possibilities:
\begin{itemize}
\item[(i)] If $u$ is the starting vertex of paths in $\Xi_n$, then - since $\Xi_n$ is weakly hoppable - the tail starting at $u$ of a self-avoiding path through $u$, is also in $\Xi_n$.
Therefore, with the assumptions on $\sigma_u^{(n)}$, $\Xi_n$ contains an open path, if and only if it contains an open path starting at $u$.
Let $A \subset E_u^{(n)}$ be the set of starting edges of open paths in $\Xi_n$ starting at $u$, if all edges in $E_u^{(n)}$ would have been open.
Since, $z_{v}(\mathbb{P}^{(a)};A,\emptyset;\mathbf{x},\emptyset) \leq z_{v}(\mathbb{P}^{(b)};A,\emptyset;\mathbf{x},\emptyset))$
for all $A$ and $\mathbf{x}$, it follows that
$$
\mathbb{P}^{(a)}(\mathcal{C}^{\Xi_n}|\sigma_u^{(n)})-
\mathbb{P}^{(b)}(\mathcal{C}^{\Xi_n}|\sigma_u^{(n)}) \geq 0.
$$
\item[(ii)] If $u$ is the end vertex of paths in $\Xi_n$, then - since $\Xi_n$ is weakly hoppable - the truncation at vertex $u$ of a self-avoiding path through $u$, is also in $\Xi_n$.
Therefore, with the assumptions on $\sigma_u^{(n)}$, $\Xi_n$ contains an open path, if and only if it contains an open path ending at $u$.
Let $B \subset E_u^{(n)}$ be the set of end edges of open paths in $\Xi_n$ ending at $u$, if all edges in $E_u^{(n)}$ would have been open.
Since $z_{v}(\mathbb{P}^{(a)};\emptyset,B;\emptyset,\mathbf{y}) \leq
z_{v}(\mathbb{P}^{(b)};\emptyset,B;\emptyset,\mathbf{y}))$
for all $B$ and $\mathbf{y}$, it follows that
$$
\mathbb{P}^{(a)}(\mathcal{C}^{\Xi_n}|\sigma_u^{(n)})-
\mathbb{P}^{(b)}(\mathcal{C}^{\Xi_n}|\sigma_u^{(n)}) \geq 0.
$$
\item[(iii)] If $u$ is neither a starting vertex nor an end vertex of paths in $\Xi_n$, then
let $A$ be the set of starting edges of tails of self-avoiding paths in $\Xi_n$, cut off at $u$, for which all edges in the tail are open, if all edges in $E_u^{(n)}$ would have been open. Similarly, let
$B$ be the set of end edges of truncations of self-avoiding paths in $\Xi_n$, cut off at $u$, for which all edges in the truncation are open, if all edges in $E_u^{(n)}$ would have been open.
Since $\Xi_n$ is weakly hoppable, it follows that there is no open path in $\Xi_n$ if either all edges in $A$ are closed or all edges in $B$ are closed.
By $z_{v}(\mathbb{P}^{(a)};A,B;\mathbf{x},\mathbf{y}) \leq z_{v}(\mathbb{P}^{(b)};A,B;\mathbf{x},\mathbf{y}))$
for all $A$, $B$, $\mathbf{x}$ and $\mathbf{y}$, it follows that
$$
(\mathbb{P}^{(a)}(\mathcal{C}^{\Xi_n}|\sigma_u^{(n)})-
\mathbb{P}^{(b)}(\mathcal{C}^{\Xi_n}|\sigma_u^{(n)})) \geq 0.
$$
\end{itemize}
This concludes the first step of the proof.

$\textbf{(ii)}$ In this second step, we relax the condition that the zero functions differ in one
place only. Since the event $\mathcal{C}^{\Xi_n}$ depends on the weights of at most $2n$ vertices in $V$,
it is straightforward to construct a sequence of probability measures,
$(\mathbb{P}^{(i)};1 \leq i \leq 2n)$,  such that $\mathbb{P}^{(1)}(\mathcal{C}^{\Xi_n})= \mathbb{P}^{(a)}(\mathcal{C}^{\Xi_n})$, $\mathbb{P}^{(2n)}(\mathcal{C}^{\Xi_n})= \mathbb{P}^{(b)}(\mathcal{C}^{\Xi_n})$ and such that two subsequent zero functions $z_v(\mathbb{P}^{(i)})$  and $z_v(\mathbb{P}^{(i+1)})$ for $1 \leq i \leq 2n-1$ differ at only one vertex $v_i \in V$. Repeatedly applying part $(i)$ finishes this part of the proof.

$\textbf{(iii)}$ To finish the proof, we simply note that from the definition of hoppable, we have
for $i=a,b$,
$$
\mathbb{P}^{(i)}(\mathcal{C}^{\Xi})=\lim_{n \to \infty}\mathbb{P}^{(i)}(\mathcal{C}^{\Xi_n}).
$$
and the result follows.
\end{proof}

For the proof of Theorem \ref{firstimpthm} we need the following elementary fact.
\begin{lemma}
\label{convexlemma}
Let $f_i(x)$, $i=1,\ldots, n$ be convex non-increasing, non-negative functions on some domain $D \subset \mathbb{R}_+$. Then $\prod_{i=1}^n f_i(x)$ is also a convex non-increasing, non-negative function on $D$.
\end{lemma}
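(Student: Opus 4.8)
The plan is to argue by induction on $n$, so that it suffices to treat the case $n=2$: given convex, non-increasing, non-negative functions $f,g$ on $D$, show that $fg$ has these same three properties. The general case then follows at once, since $\prod_{i=1}^{n} f_i = \bigl(\prod_{i=1}^{n-1} f_i\bigr)\cdot f_n$ and the inductive hypothesis guarantees that $\prod_{i=1}^{n-1} f_i$ is itself convex, non-increasing and non-negative.

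For $n=2$, non-negativity of $fg$ is immediate, and for $x\le y$ in $D$ one has $0\le f(y)\le f(x)$ and $0\le g(y)\le g(x)$, whence $f(y)g(y)\le f(x)g(x)$; that is, $fg$ is non-increasing. The only substantive point is convexity. Fix $x,y\in D$ and $\lambda\in[0,1]$ and set $z=\lambda x+(1-\lambda)y$. By convexity and non-negativity of each factor,
$$0\le f(z)g(z)\le \bigl(\lambda f(x)+(1-\lambda)f(y)\bigr)\bigl(\lambda g(x)+(1-\lambda)g(y)\bigr).$$
Expanding the right-hand side and subtracting it from $\lambda f(x)g(x)+(1-\lambda)f(y)g(y)$, a short computation shows the difference equals
$$\lambda(1-\lambda)\bigl(f(x)-f(y)\bigr)\bigl(g(x)-g(y)\bigr).$$
Now I invoke monotonicity: taking without loss of generality $x\le y$, both $f(x)-f(y)\ge 0$ and $g(x)-g(y)\ge 0$, so this quantity is non-negative. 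Hence $f(z)g(z)\le \lambda f(x)g(x)+(1-\lambda)f(y)g(y)$, which is the convexity of $fg$.

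The step requiring the most care is exactly this expansion and the identification of the leftover term: convexity alone is not preserved under multiplication—the cross term $\lambda(1-\lambda)\bigl(f(x)g(y)+f(y)g(x)\bigr)$ would otherwise spoil the estimate—and it is precisely the common sign of $f(x)-f(y)$ and $g(x)-g(y)$, forced by both functions being non-increasing, that makes the argument close. (If one were willing to assume $C^2$ smoothness, the same mechanism is visible in $(fg)'' = f''g + 2f'g' + fg''$, where $f''g\ge 0$ and $fg''\ge 0$ by convexity and non-negativity and $f'g'\ge 0$ by monotonicity; the direct computation above has the advantage of avoiding any differentiability hypothesis.)
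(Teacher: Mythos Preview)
Your argument is correct and follows essentially the same route as the paper: induction on $n$, reducing to $n=2$, where you expand $\bigl(\lambda f(x)+(1-\lambda)f(y)\bigr)\bigl(\lambda g(x)+(1-\lambda)g(y)\bigr)$ and observe that the discrepancy from $\lambda f(x)g(x)+(1-\lambda)f(y)g(y)$ is $\lambda(1-\lambda)\bigl(f(x)-f(y)\bigr)\bigl(g(x)-g(y)\bigr)\ge 0$ by common monotonicity. The paper presents exactly this identity and inequality (with the induction step left to the reader), so your proof matches it and in fact spells out slightly more detail.
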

\begin{proof}
For $n=2$ observe that for $0<c<1$ and $y>x$
\begin{eqnarray*}
c f_1(x)f_2(x) + (1-c)f_1(y)f_2(y)& = & [c f_1(x) + (1-c)f_1(y)][c f_2(x) + \\
& & + (1-c)f_2(y)] + c(1-c)[f_1(y)-f_1(x)][f_2(y)-f_2(x)]\\
& \geq & f_1(cx +(1-c)y)f_2(cx+(1-c)y),
\end{eqnarray*}
where in the inequality, we have used that both $f_1$ and $f_2$ are non-increasing and convex. The proof of the lemma can be completed
with induction on $n$ - we leave the details to the reader.
\end{proof}

\noindent
\begin{proof}[Proof of Theorem \ref{firstimpthm}]
Let $(W^*,\bar{W}^*) \in S$ be such that $\mathbb{E}(W) \leq  W^*$, $\mathbb{E}(\bar{W}) \leq  \bar{W}^*$ and $\mathbb{E}(W \bar{W}) \leq W^*\bar{W}^*$. Let $\hat{\mathbb{P}}^{bond}$ be a measure that satisfies $\hat{\mathbb{P}}^{bond}(W = W^*,\bar{W} = \bar{W}^*) =1$.

We proceed by proving that for all $A$, $B$, $\mathbf{x}$ and $\mathbf{y}$,
$$z_v(\mathbb{P};A,B; \mathbf{x},\mathbf{y}) \geq z_v(\hat{\mathbb{P}}^{bond};A,B; \mathbf{x},\mathbf{y}).$$
From Theorem \ref{zerofuncthm} and the observation that the induced measure of $\hat{\mathbb{P}}^{bond}$ on $\Omega$ is just $\mathbb{P}_p^{bond}$, where $p=\kappa(W^*\bar{W}^*)$, Theorem \ref{firstimpthm} then follows.

If $|B|>0$, then $z_v(\mathbb{P}; \emptyset,B; \emptyset,\mathbf{y}) = \mathbb{E}(\prod_{i=1}^{|B|} [1- \kappa(y_i \bar{W})])$.
The functions $1- \kappa(y_i \bar{W})$ are by assumption non-negative, convex and decreasing. Therefore, by Lemma \ref{convexlemma} and Jensen's inequality
$$ \mathbb{E}(\prod_{i=1}^{|B|} [1- \kappa(y_i \bar{W})]) \geq \prod_{i=1}^{|B|} [1- \kappa(y_i \mathbb{E}(\bar{W}))] \geq  \prod_{i=1}^{|B|} [1- \kappa(y_i \bar{W}^*)],$$
which is equal to $z_v(\hat{\mathbb{P}}^{bond};\emptyset,B; \emptyset,\mathbf{y})$.
For $|A|>0$, we can use similar arguments to show that $$z_v(\mathbb{P};A,\emptyset; \mathbf{x},\emptyset) \geq z_v(\hat{\mathbb{P}}^{bond};A,\emptyset; \mathbf{x},\emptyset).$$

If $|A||B| \geq 1$, then we need to show that
$$
\mathbb{E}\Bigl((1-\prod_{i=1}^{|A|}[1-\kappa(Wx_i)])(1-\prod_{j=1}^{|B|}[1-\kappa(\bar{W}y_j)]) \Bigr) \leq
(1-\prod_{i=1}^{|A|}[1-\kappa(W^*x_i)])(1-\prod_{j=1}^{|B|}[1-\kappa(\bar{W}^*y_j)]).
$$
By the assumption that $\kappa(z)$ is increasing concave and taking values in $[0,1]$ and by Lemma \ref{convexlemma} it follows
that $1-\prod_{i=1}^{|A|}[1-\kappa(x_iu)]$ is concave and increasing in $u$.
By this concavity and $\kappa(0) \geq 0$ it is possible to chose $a := a(\mathbf{x}) \geq 0$ and $b := b(\mathbf{x}) \geq 0$ such that $a +b u \geq 1-\prod_{i=1}^{|A|}[1-\kappa(x_iu)]$ for all $u \in S_1$ and
$a +b W^* = 1-\prod_{i=1}^{|A|}[1-\kappa(x_iW^{*})]$.
Similarly,  $1-\prod_{j=1}^{|B|}[1-\kappa(y_j u)]$ is concave and increasing in $u$, and it is possible to chose
$\bar{a} := \bar{a}(\mathbf{y}) \geq 0$ and $\bar{b} := \bar{b}(\mathbf{y}) \geq 0$ such that $\bar{a} +\bar{b} u \geq 1-\prod_{j=1}^{|B|}[1-\kappa(y_j u)]$ for all $u \in S_1$ and
$\bar{a} +\bar{b} \bar{W}^* = 1-\prod_{ij=1}^{|B|}[1-\kappa(y_j\bar{W}^{*})]$.

It follows that
\begin{eqnarray*}
\ & \ & \mathbb{E}\Bigl((1-\prod_{i=1}^{|A|}[1-\kappa(Wx_i)])(1-\prod_{j=1}^{|B|}[1-\kappa(\bar{W}y_j)]) \Bigr) \\
\ & \leq & \mathbb{E}\Bigl((a+ bW)(\bar{a} +\bar{b}\bar{W} \Bigr) \\
\ & = & a \bar{a} + a \bar{b} \mathbb{E}(\bar{W}) + \bar{a} b \mathbb{E}(W) + b \bar{b} \mathbb{E}(W\bar{W})\\
\ & \leq & ab + a \bar{b} \bar{W}^* + \bar{a} b W^* +b \bar{b} W^* \bar{W}^*\\
\ & = & (a + bW^*)(\bar{a} + \bar{b} \bar{W}^*)\\
\ & = & (1-\prod_{i=1}^{\infty}[1-\kappa(W^*x_i)])(1-\prod_{j=1}^{|B|}[1-\kappa(\bar{W}^*y_j)]),
\end{eqnarray*}
where the second inequality follows from the definitions of $W^*$ and $\bar{W}^*$.
This finishes the proof of Theorem \ref{firstimpthm}.
\end{proof}

\begin{proof}[Proof of Theorem \ref{secondimpthm}]

In order to use Theorem \ref{zerofuncthm}, we have to show that for all $\mathbb{P}$, $A,B \subset E^{(n)}_v$, all $\mathbf{x} \in (S_1)^{|A|}$ and all $\mathbf{y} \in (S_2)^{|B|}$ we have
$$z_v(\mathbb{P};A,B; \mathbf{x},\mathbf{y}) \leq z_v(\mathbb{P}_p^{site};A,B;\mathbf{x},\mathbf{y}),$$
where $p = \mathbb{E}(W\bar{W})$.

For $\kappa(x,y) =xy$, this means that we have to prove that for $|A|\geq 1$ and $|B| \geq 1$ it holds that
\begin{eqnarray*}
\mathbb{E}\Bigl(1- [1-\prod_{i=1}^{|A|} (1-Wx_i)] [1-\prod_{j=1}^{|B|} (1-y_j\bar{W})] \Bigr) & \geq & 1- \mathbb{E}(W\bar{W}) [1-\prod_{i=1}^{|A|} (1-x_i)] [1-\prod_{j=1}^{|B|} (1-y_j)],\\
\mathbb{E}\Bigl(\prod_{i=1}^{|A|} (1-Wx_i)]\Bigr) & \geq & \mathbb{E}(W\bar{W}) [1-\prod_{i=1}^{|A|} (1-x_i)],\\
\mathbb{E}\Bigl(\prod_{j=1}^{|B|} (1-y_j\bar{W}) \Bigr) & \geq & \mathbb{E}(W\bar{W}) [1-\prod_{j=1}^{|B|} (1-y_j)],
\end{eqnarray*}
where respectively the cases $|A||B|>0$, $B= \emptyset$ and $A= \emptyset$ are considered.

It is easy to check by induction that for $u \in [0,1]$ and $\mathbf{x} \in [0,1]^{|A|}$, it holds that  $1-\prod_{i=1}^{|A|} (1-ux_i) \geq u[1-\prod_{i=1}^{|A|} (1-x_i)]$.
Similarly  for $\bar{u} \in [0,1]$ and $\mathbf{y} \in [0,1]^{|B|}$, it holds that  $1-\prod_{j=1}^{|B|} (1-\bar{u}y_j) \geq \bar{u}[1-\prod_{j=1}^{|B|} (1-y_j)]$. Substituting this with $u$ replaced by $W$ and $\bar{u}$ by $\bar{W}$, finishes the proof of Theorem \ref{secondimpthm}.
\end{proof}

\section*{Acknowledgments}
We want to thank an anonymous referee for useful comments which lead to a more elegant proof of Theorem 1.1.
The presented research is supported in part by a Vici grant of the NWO (Dutch Organization for Scientific Research).

\end{document}